\theoremstyle{plain}
\newtheorem*{theorem}{Theorem}
\newtheorem{proposition}{Proposition}
\newtheorem{lemma}[proposition]{Lemma}
\theoremstyle{definition}
\newcommand{\przest}{L^2(\mathbb{R}^d)}
\newcommand{\punkt}{x_1,y_1,...,x_d,y_d}
\newcommand{\hi}{\mathcal{H}}
\newcommand{\la}{\langle}
\renewcommand{\leq}{\leqslant}
\renewcommand{\geq}{\geqslant}
\DeclareMathOperator{\tr}{tr}
\begin{document}
\title[Quantum domain of attraction]{Domain of attraction of
Gaussian \\probability operators in quantum limit theory}
%\dedicatory{}
\author{Katarzyna Lubnauer}
\address{Faculty of Mathematics and Computer Science\\
        \L\'od\'z University \\
        ul. S. Banacha 22 \\
        90-238 \L\'od\'z, Poland}
\email{lubnauer@math.uni.lodz.pl}
\author{Andrzej \L uczak}
\email{anluczak@math.uni.lodz.pl}
%\thanks{Work supported by}
\keywords{Probability operators, quantum limit theorem, domain of attraction}
\subjclass{Primary: 46L53; Secondary: 28D05}
\date{}
\begin{abstract}
 We characterise the class of probability operators belonging to
 the domain of attraction of Gaussian limits in the setup which is
 a slight generalisation of Urbanik's scheme of noncommutative
 probability limit theorems.
\end{abstract}
\maketitle
\section{Preliminaries and notation}
In our investigation of the domain of attraction in quantum limit
theory we adopt the approach introduced in the fundamental paper
\cite{U} which can be briefly described as follows. Let $\hi$ be a
separable Hilbert space. By a \emph{probability operator} we mean a
positive operator on $\hi$ of unit trace. It is well known that such
operators are in a one-to-one correspondence with normal states
$\rho$ on $\mathbb{B}(\hi)$, and this correspondence is given by the
formula
\[
 \rho(A)=\tr AT,\qquad A\in\mathbb{B}(\hi).
\]
The set of all probability operators on $\hi$ will be denoted by
$\mathfrak{P}$. By $\mathfrak{L}^1$ we shall denote the set of all
trace-class operators on $\hi$, and by \linebreak $\mathfrak{L}^2$
--- the set of all Hilbert--Schmidt operators.

Let  $z\mapsto V(z)$ be an irreducible projective unitary
representation of the group $\mathbb R^{2d}$ on $\hi$, satisfying the
Weyl--Segal commutation relations
\begin{equation}\label{W-S}
 V(z)V(z')=e^{\frac{i}{2}\Delta(z,z')}V(z+z'),
\end{equation}
where  $z, z'\in\mathbb R^{2d},\,z = (\punkt),\,z'=(x_1',
y_1',\dots,x_d',y_d')$, and
\[
\Delta(z, z') = \sum_{k=1}^d(x_ky'_k - y_kx'_k).
\]
Fix $z\in\mathbb{R}^{2d}$. It is easily seen that
$\{V(tz):t\in\mathbb{R}\}$ is a one parameter unitary group, thus by
the Stone theorem there is a selfadjoint operator $R(z)$ on $\hi$
such that
\[
 V(tz)=e^{itR(z)},\qquad t\in\mathbb{R},
\]
consequently,
\[
 V(z)=e^{iR(z)}.
\]
The operators $R(z)$ are called in \cite{H} \emph{canonical
observables}. Let
\[
 R(z)=\int_{-\infty}^{\infty}\lambda\,E_z(d\lambda)
\]
be the spectral representation of $R(z)$. For a probability operator
$T$ we define its \emph{mean value} $m_1^T(z)$, \emph{second moment}
$m_2^T(z)$ and \emph{variance} $\sigma^2_T(z)$ by the formulae
\begin{align*}
 m_1^T(z)&=\int_{-\infty}^{\infty}\lambda\,\tr TE_z(d\lambda)\\
 m_2^T(z)&=\int_{-\infty}^{\infty}\lambda^2\,\tr TE_z(d\lambda)\\
 \sigma^2_T(z)&=\int_{-\infty}^{\infty}(\lambda-m_1^T(z))^2\,\tr
 TE_z(d\lambda)=m_2^T(z)-m_1^T(z)^2
\end{align*}
(cf. \cite[Chapter V, \$ 4]{H}). Note that the notions defined above
correspond to the mean value (expectation), second moment and
variance, respectively, of the Borel probability measure $\mu_z$
determined by the formula
\begin{equation}\label{miara}
 \mu_z(\Lambda)=\tr TE_z(\Lambda),\qquad \Lambda\in\mathcal{B}(\mathbb{R}).
\end{equation}
A probability operator $T$ is said to have \emph{finite variance} if
for each $z\in\mathbb{R}^{2d}$, $\sigma_T^2(z)<\infty$ (equivalently,
$m_2^T(z)<\infty$).

For a probability operator $T$ we define its \emph{characteristic
function} $\widehat{T}\colon\mathbb{R}^{2d}\to\mathbb{C}$ as
\begin{equation}\label{ch}
 \widehat{T}(z)=\tr TV(z),\qquad z\in\mathbb{R}^{2d}.
\end{equation}
$\widehat{T}$ has the following property called
$\Delta$-\emph{positive definiteness}: for arbitrary complex numbers
$c_1,\dots,c_n$ and vectors $z_1,\dots,z_n\in\mathbb{R}^{2d}$
\[
 \sum_{j,k=1}^nc_j\bar{c}_k\widehat{T}(z_j-z_k)e^{\frac{i}{2}
 \Delta(z_j,z_k)}\geq 0.
\]
`Quantum Bochner's theorem' states that for a complex-valued function
$f\colon\mathbb{R}^{2d}\to\mathbb{C}$ we have $f=\widehat{T}$ for a
certain probability operator $T$ if and only if $f$ is
$\Delta$-positive definite, continuous at the origin and  $f(0)=1$
(cf. \cite[Chapter V, \$ 4]{H}).

It is immediately seen that for an arbitrary probability operator $T$
and an arbitrary $z_0\in\mathbb{R}^{2d}$ the function
\[
 \mathbb{R}^{2d}\ni z\mapsto e^{i\la z_0,z\rangle}\widehat{T}(z)
\]
is the characteristic function of some probability operator.

Formula \eqref{ch} for $T\in\mathfrak{L}^1$ defines a map which
extends uniquely to a linear isometry from $\mathfrak{L}^2$ onto the
space of all complex-valued square integrable with respect to
Lebesgue measure functions $f$ with the norm
\[
 \|f\|_2=\Big(\frac{1}{(2\pi)^d}\int_{\mathbb{R}^{2d}}|f(z)|^2\,dz\Big)^{1/2}
\]
(cf. \cite[Chapter V, \$ 3, Theorem 3.2]{H}).

Let $\mathfrak{A}$ be the set of all Hilbert-Schmidt operators $T$
for which $\widehat{T}$ vanishes at infinity. We define the
convolution $\star$ in $\mathfrak{A}$ by setting
\[
\widehat{T_1\star T_2} = \widehat{T}_1 \widehat{T}_2.
\]
Moreover, we put $\|T\| = \|\widehat{T}\|$. Then
\[
\|T_1\star T_2\|\leq \|T_1\| \|T_2\|,
\]
and consequently, the convolution algebra $\mathfrak{A}$ is a
Banach algebra without unit. The following inclusions hold true
\[
 \mathfrak{P}\subset\mathfrak{L}^1\subset\mathfrak{A}
 \subset\mathfrak{L}^2
\]
(cf. \cite{U}).

\section{Statement of the problem}
The general scheme of quantum limit theorems introduced in \cite{U}
is as follows. For a triangular array
$\{T_{kn}:k=1,\dots,k_n;\,n=1,2\dots\}$ of probability operators, a
norming array $\{a_{kn}:k=1,\dots,k_n;\,n=1,2,\dots\}$ of positive
numbers, and a sequence $\{z_n\}$ of elements from $\mathbb{R}^{2d}$
we form probability operators $S_n$ defined by the characteristic
functions
\begin{equation}\label{lch}
 \widehat{S}_n(z)=e^{i\la z_n,z\rangle}\prod_{k=1}^{k_n}\widehat{T}_{kn}
 (a_{kn}z),\qquad z\in\mathbb{R}^{2d}.
\end{equation}
The norming constants $a_{kn}$ should satisfy the assumption of
\emph{admissibility} which means that the maps
\[
 \mathbb{R}^{2d}\ni z\mapsto\prod_{k=1}^{k_n}\widehat{T}_k(a_{kn}z)
\]
are the characteristic functions of some probability operators for
each $n$ and \emph{any} probability operators $T_1,\dots,T_n$. Now if
\[
 \lim_{n\to\infty}\widehat{S}_n(z)=\widehat{S}(z),\qquad z\in\mathbb{R}^{2d}
\]
for some function $\widehat{S}$, then from quantum Bochner's theorem
it follows that $\widehat{S}$ is the characteristic function of some
uniquely determined probability operator $S$. In this case $S$ is
called the \emph{limit operator}. In the paper \cite{U} the class of
limit operators was described under the assumption of uniform
infinitesimality of the operators from $\mathfrak{A}$ given by the
functions
$\{\widehat{T}_{kn}(a_{kn}\cdot):k=1,\dots,k_n;\,n=1,2,\dots\}$,
analogously to the case of the classical infinitely divisible limit
laws, while in the paper \cite{L2} for the case $d=1$ norming by
arbitrary $2\times 2$ matrices was considered. We shall be concerned
with a quantum counterpart of the classical stable limit laws, i.e.
we assume that $k_n=n$ and $T_{1n}=\dots=T_{nn}=T$ for some
probability operator $T$. As for norming we adopt the above-mentioned
more general approach and as the norming matrices we take matrices
$A_n$ of the form
\begin{equation}\label{nm}
 A_n=\begin{bmatrix}
         a_1^{(n)} & 0 & \dots & 0 & 0\\
         0 & a_1^{(n)} & \dots & 0 & 0\\
         \vdots & \vdots & \ddots & \vdots\\
         0 & 0 & \dots & a_d^{(n)} & 0\\
         0 & 0 & \dots & 0 & a_d^{(n)},
         \end{bmatrix}.
\end{equation}
As in the scalar case we put the assumption of \emph{admissibility}
of the matrices $A_n$ which means that for each $n$ and \emph{any}
probability operators $T_1,\dots,T_n$ the function
\[
 \mathbb{R}^{2d}\ni z\mapsto\prod_{k=1}^n\widehat{T}_k(A_nz)=
 \prod_{k=1}^n\widehat{T}_k(a_1^{(n)}x_1,a_1^{(n)}y_1,\dots,a_d^{(n)}x_d,
 a_d^{(n)}y_d)
\]
is the characteristic function of some probability operator. In this
case the limit operator $S$ will be said to belong to the
\emph{domain of attraction of the probability operator} $T$.

To justify this approach let us look at the fundamental notion of the
(multidimensional) Schr\"{o}dinger pair of canonical observables.
Define in Hilbert space $L^2(\mathbb{R}^d)$ operators $p^{(0)}_k$ and
$q^{(0)}_k,\quad k=1,...,d,$ (called momentum and position operators,
respectively) by the formulae
\[
 \begin{aligned}
  &(p^{(0)}_k\psi)(x_1,...,x_d)=(D_k\psi)(x_1,...,x_d),\\
  &(q^{(0)}_k\psi)(x_1,...,x_d)=-ix_k\psi(x_1,...,x_d),
 \end{aligned}
\]
where $D_k$ denotes the $k$-th partial derivative. The operators
$p^{(0)}_k\text{ and }q^{(0)}_k$ are unbounded densely defined and
selfadjoint; moreover, they satisfy the commutation relations
\begin{equation}\label{ccr}
 [p^{(0)}_k,p^{(0)}_j]=[q^{(0)}_k,q^{(0)}_j]=0,
 \qquad[p^{(0)}_k,q^{(0)}_j]=-i\delta_{kj}\boldsymbol{1},
\end{equation}
where for operators $A,B$ on $\przest$
\[
 [A,B]=AB-BA,
\]
and $\boldsymbol{1}$ stands for the identity operator (observe
that since $p^{(0)}_k\text{ and }q^{(0)}_k$ are densely defined,
relations \eqref{ccr} are assumed to hold only on a dense
subspace of $\przest$).

The pair
$(p^{(0)},q^{(0)})=((p_1^{(0)},q_1^{(0)}),\dots,(p_d^{(0)},q_d^{(0)}))$
is called the Schr\"{o}\-din\-ger pair of canonical observables.
Putting
\begin{equation}\label{weyl}
 \begin{split}
 &V^{(0)}(\punkt)=\exp\Big\{i\sum^d_{k=1}\big(x_kp^{(0)}_k+y_kq^{(0)}_k
 \big)\Big\},\\&(\punkt)\in\mathbb{R}^{2d}.
 \end{split}
\end{equation}
we easily see that $z\mapsto V^{(0)}(z)$ is a projective unitary
representation of the group $\mathbb R^{2d}$ on $\hi$, satisfying the
Weyl--Segal commutation relations \eqref{W-S}. Now if $T^{(0)}$ is a
probability operator on $\przest$ (we use a superscript ${}^{(0)}$
when referring to the space $\przest$) then its characteristic
function at the point $A_nz$ for $A_n$ given by the formula
\eqref{nm} equals to
\begin{align*}
 \widehat{T}^{(0)}(A_nz)&=\tr T^{(0)}V^{(0)}(A_nz)=\tr T^{(0)}
 \exp\Big\{i\sum^d_{k=1}a_k^{(n)}\big(x_kp^{(0)}_k+y_kq^{(0)}_k\big)\Big\}\\
 &=\tr T^{(0)}
 \exp\Big\{i\sum^d_{k=1}\big[x_k(a_k^{(n)}p^{(0)}_k)
 +y_k(a_k^{(n)}q^{(0)}_k)\big]\Big\}
\end{align*}
which corresponds to the passing from the multidimensional canonical
pair $((p_1^{(0)},q_1^{(0)}),\dots,(p_d^{(0)},q_d^{(0)}))$ to the
pair
\[
 ((a_1^{(n)}p_1^{(0)},a_1^{(n)}q_1^{(0)}),\dots,
 (a_d^{(n)}p_d^{(0)},a_d^{(n)}q_d^{(0)})),
\]
i.e. each of the component pairs $(p_k^{(0)},q_k^{(0)})$ being normed
by possibly different numbers $a_k^{(n)},\,k=1,\dots,d$. It is worth
noting that in the pioneering paper \cite{CH} on quantum limit
theorems, the central limit theorem was formulated just in the
language of canonical pairs, though solely in the case $d=1$ and with
the classical scalar norming $a_1^{(n)}=\frac{1}{\sqrt{n}}$.

Coming back to our setup, we have
\begin{align*}
 \lim_{n\to\infty}\widehat{S}_n(z)&=\lim_{n\to\infty}
 e^{i\la z_n,z\rangle}\big[\widehat{T}(a_1^{(n)}x_1,a_1^{(n)}y_1,\dots,
 a_d^{(n)}x_d,a_d^{(n)}y_d)\big]^n\\&=\widehat{S}(\punkt),
 \qquad z=(\punkt)\in\mathbb{R}^{2d}.
\end{align*}
It was proved in \cite{L1} that then the limit operator $S$ must be
Gaussian, i.e. $\widehat{S}$ is the characteristic function of a
Gaussian probability distribution on $\mathbb{R}^{2d}$. In the
classical commutative situation various sufficient conditions on
belonging to the domain of attraction of a Gaussian law have been
obtained --- the most celebrated being that of finite variance as in
the Lindeberg--L\'{e}vy central limit theorem. It turns out that in
the quantum case this condition is also necessary. Namely, we shall
prove the following
\begin{theorem}
 Let $T$ be an arbitrary probability operator on $\hi$. $T$ belongs
 to the domain of attraction of a Gaussian probability operator
 if and only if $T$ has finite variance.
\end{theorem}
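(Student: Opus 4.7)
For the sufficiency direction I will take the uniform scaling $a_k^{(n)} = 1/\sqrt{n}$ for every $k$ and choose $z_n$ so that $\langle z_n,z\rangle = -\sqrt{n}\,m_1^T(z)$. Since $T$ has finite variance, the second-order Taylor expansion
\[
\widehat{T}(z/\sqrt{n}) = 1 + i m_1^T(z)/\sqrt{n} - m_2^T(z)/(2n) + o(1/n)
\]
is available, so $n\log\widehat{T}(z/\sqrt{n}) = i\sqrt{n}\,m_1^T(z) - \sigma_T^2(z)/2 + o(1)$ and the linear divergence is cancelled by $e^{i\langle z_n,z\rangle}$, giving $\widehat{S}_n(z) \to e^{-\sigma_T^2(z)/2}$. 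To conclude this direction I have to check that the limit is $\Delta$-positive definite; this is the Schr\"odinger--Robertson uncertainty for $T$, namely that $\sigma_T^2 + (i/2)\Delta$ is positive semidefinite as a complex Hermitian matrix, an inequality automatically satisfied by the covariance form of any finite-variance probability operator. Quantum Bochner's theorem then identifies the limit as a Gaussian probability operator.

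For the necessity direction, assume $T$ lies in the domain of attraction of a Gaussian probability operator, so $\widehat{S}(z) = \exp(iL(z) - Q(z)/2)$ for some linear $L$ and quadratic form $Q$. The $\Delta$-positive definiteness of $\widehat{S}$ forces $Q + (i/2)\Delta \geq 0$, which in turn, because $\Delta$ is non-degenerate, makes $Q$ \emph{strictly} positive definite on $\mathbb{R}^{2d}$. The critical preliminary step is to squeeze a lower bound on the norming constants out of the admissibility hypothesis. Taking $T_1=\cdots=T_n$ equal to a minimum-uncertainty Gaussian state (with characteristic function $e^{-|z|^2/4}$, realisable on $\hi$ via the Stone--von Neumann uniqueness theorem), the product is $e^{-n|A_n z|^2/4}$, which is itself a Gaussian candidate; the same $\Delta$-positive-definiteness calculation shows that it is a valid quantum characteristic function only when $n(a_k^{(n)})^2 \geq 1$, i.e.\ $a_k^{(n)} \geq 1/\sqrt{n}$, for each block $k$.

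With this bound in hand I fix a block $k$ and a direction $z$ lying entirely in that block, so $A_n z = a_k^{(n)} z$. The function $t \mapsto \widehat{T}(tz) = \tr T e^{itR(z)}$ is the classical scalar characteristic function of the spectral measure $\mu_z$ of $R(z)$ in the state $T$, and the hypothesis reads
\[
\widehat{T}(a_k^{(n)} z)^n e^{i\langle z_n, z\rangle} \longrightarrow e^{iL(z) - Q(z)/2}.
\]
Since $Q(z) > 0$, this is ordinary convergence to the characteristic function of a non-degenerate normal distribution on $\mathbb{R}$, so $\mu_z$ belongs to the classical domain of attraction of the normal law with scalar norming $a_k^{(n)}$. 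The Feller--Khinchin characterisation of that domain asserts that if $\mu_z$ had infinite variance then necessarily $\sqrt{n}\,a_k^{(n)} \to 0$; but this directly contradicts the admissibility bound $\sqrt{n}\,a_k^{(n)} \geq 1$. Hence $\sigma_T^2(z) < \infty$ for every $z$ in a single block. For a general $z = \sum_k z^{(k)}$ with $z^{(k)}$ supported in block $k$, the triangle inequality for the Hilbert--Schmidt norm gives
\[
\sqrt{\sigma_T^2(z)} = \|R(z) T^{1/2}\|_2 \leq \sum_k \|R(z^{(k)}) T^{1/2}\|_2 = \sum_k \sqrt{\sigma_T^2(z^{(k)})} < \infty.
\]

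The main obstacle is isolating the admissibility lower bound $a_k^{(n)} \geq 1/\sqrt{n}$ and seeing that it is precisely the quantum Heisenberg uncertainty which closes off the classical loophole: in commutative probability, distributions of infinite variance can still belong to the domain of attraction of the normal law because scalings with $\sqrt{n}\,a_n \to 0$ are permitted, but here admissibility --- forced on us by the very existence of the Weyl--Segal commutation relations --- rules them out.
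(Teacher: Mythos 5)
Your proposal is correct and follows essentially the same route as the paper: the lower bound $a_k^{(n)}\geq 1/\sqrt{n}$ squeezed out of admissibility by testing with the minimum-uncertainty Gaussian $e^{-\frac14\|z\|^2}$ is exactly Proposition \ref{P} (with the paper's first lemma guaranteeing that this Gaussian is indeed a probability operator), the strict positive definiteness of $Q$ is the paper's second lemma, the classical fact that infinite variance would force $\sqrt{n}\,a_k^{(n)}\to 0$ plays the role of Lemma \ref{L}, the blockwise reduction is identical, and the sufficiency argument with $a_k^{(n)}=1/\sqrt{n}$ and centring $\la z_n,z\rangle=-\sqrt{n}\,m_1^T(z)$ is the paper's.

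Two places in your write-up are thinner than they should be. First, the recombination inequality $\|R(z)T^{1/2}\|_2\leq\sum_k\|R(\bar{z}_k)T^{1/2}\|_2$ is not a bare triangle inequality: it presupposes that the unbounded operator $R(z)$ acts as $\sum_k R(\bar{z}_k)$ on the vectors to which you apply it, and that those vectors lie in the relevant domains. This is precisely where the paper does its technical work: since $\Delta(\bar{z}_j,\bar{z}_k)=0$ for $j\neq k$, the groups $V(t\bar{z}_k)$ commute and $V(tz)=V(t\bar{z}_1)\cdots V(t\bar{z}_d)$, which yields a joint spectral measure $F$ with $R(z)=\int f\,dF$, $R(\bar{z}_k)=\int f_k\,dF$ and $f=\sum_k f_k$; only then does Minkowski (or the paper's $f^2\leq d\sum_k f_k^2$) in $L^2(\tr TF(\cdot))$ give your estimate. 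Note also that $\|R(z)T^{1/2}\|_2^2$ is the second moment $m_2^T(z)$, not the variance $\sigma_T^2(z)$ --- harmless, since finiteness of one is finiteness of the other. Second, in the sufficiency direction, belonging to the domain of attraction requires by definition that the norming be admissible; your verification that the limit $e^{-\frac12\sigma_T^2(z)}$ satisfies the uncertainty condition \eqref{cm} is a legitimate alternative to the paper's argument (which instead uses that the prelimit functions are characteristic functions of probability operators), but you still need to record that $a_k^{(n)}=1/\sqrt{n}$ is admissible (Urbanik's Proposition 2.5, or a direct check of $\Delta$-positive definiteness) and that $\sigma_T^2$ is a quadratic form in $z$, so that the limit really is Gaussian.
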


\section{Proofs}
We begin with a simple lemma which gives a description of the
characteristic function of Gaussian probability operators in a
particular case.
\begin{lemma}
Let
\[
 f(z)=e^{-\frac{a}{2}\|z\|^2}
\]
for some $a>0$. $f$ is the characteristic function of some Gaussian
probability operator if and only if $a\geq\frac{1}{2}$.
\end{lemma}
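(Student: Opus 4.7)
The plan is to apply quantum Bochner's theorem: since $f$ is continuous and satisfies $f(0)=1$, what needs to be shown is that $f$ is $\Delta$-positive definite precisely when $a \geq \tfrac{1}{2}$.

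For sufficiency I would first dispose of the critical case $a=\tfrac12$ by exhibiting a concrete Gaussian probability operator whose characteristic function is $g(z)=e^{-\|z\|^2/4}$. Working in the Schr\"odinger representation \eqref{weyl}, the natural candidate is the rank-one operator associated with the tensor product of one-dimensional Gaussian ground states; a direct calculation via the Weyl form of \eqref{W-S} shows that its characteristic function is indeed $g$. For $a = \tfrac12 + b$ with $b\geq 0$ the remaining factor $h(z)=e^{-b\|z\|^2/2}$ is the classical Fourier transform of a centred Gaussian probability measure $\mu$ on $\mathbb R^{2d}$. By the remark in Section~1, for every $z_0$ the function $z\mapsto e^{i\la z_0,z\rangle}g(z)$ is the characteristic function of a probability operator $T_{z_0}$; integrating $T_{z_0}$ against $\mu$ then produces a probability operator whose characteristic function is the product $g(z)h(z)=f(z)$.

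For necessity I would assume $T$ has $\widehat T(z)=e^{-a\|z\|^2/2}$ and extract the bound from the uncertainty principle. First note that for each fixed $z$ the scalar map $t\mapsto\widehat T(tz)=e^{-at^2\|z\|^2/2}$ is the Fourier transform of the centred one-dimensional Gaussian of variance $a\|z\|^2$, which by \eqref{miara} is $\mu_z$; hence $T$ has finite variance with $\sigma_T^2(z)=a\|z\|^2$. Differentiating \eqref{W-S} at the origin yields the canonical commutation relation $[R(z),R(z')]=-i\Delta(z,z')\mathbf{1}$ on a common dense domain, and the Robertson uncertainty inequality then gives $\sigma_T(z)\sigma_T(z')\geq\tfrac12|\Delta(z,z')|$. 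Choosing $z=(1,0,\dots,0)$ and $z'=(0,1,0,\dots,0)$, so that $\|z\|=\|z'\|=1$ and $\Delta(z,z')=1$, forces $a^2\geq\tfrac14$ and hence $a\geq\tfrac12$.

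The main obstacle is the rigorous handling of the uncertainty step, since the canonical observables $R(z)$ are unbounded; the finiteness of $m_2^T(z)$ just derived is exactly what should make the standard argument go through. A fully elementary substitute, should the above prove awkward, is to test $\Delta$-positive definiteness of $f$ directly at the three points $z_1=0$, $z_2=(t,0,\dots,0)$, $z_3=(0,t,0,\dots,0)$; expanding the resulting $3\times 3$ principal minor to leading order in $t$ yields a coefficient proportional to $a^2-\tfrac14$, giving the same bound without any appeal to unbounded operators.
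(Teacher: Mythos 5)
Your proof is correct, but it follows a genuinely different route from the paper. The paper never touches quantum Bochner's theorem directly: it observes that $f$ is the Fourier transform of a Gaussian measure with covariance $Q=aI$ and then invokes the known characterisation (from Holevo, cited as \eqref{cm}) that $Q$ is the covariance of a Gaussian probability operator if and only if $\la Qz,z\rangle+\la Qz',z'\rangle\geq\Delta(z,z')$ for all $z,z'$; this reduces to the positive definiteness of a $4\times4$ matrix whose eigenvalues are $a\pm\frac12$. You instead prove both directions from scratch: sufficiency by an explicit construction (the Gaussian ground state with characteristic function $e^{-\|z\|^2/4}$, displaced and mixed over a classical centred Gaussian $\mu$, which is legitimate and identifies the operator concretely as a thermal-type Gaussian state; equivalently one can note that the product of a $\Delta$-positive definite function with a classically positive definite one is $\Delta$-positive definite), and necessity by the uncertainty relation. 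Two remarks. First, your Robertson-uncertainty step does need the care you anticipate: $R(z)$ is unbounded and $T$ is a mixed state, so the textbook argument requires domain hypotheses; in fact the inequality \eqref{cm} used by the paper \emph{is} the rigorous, integrated form of exactly this uncertainty statement, so in that version of your argument you are re-deriving a special case of the paper's cited criterion. Second, your elementary fallback is sound and closes that gap cleanly: with $z_1=0$, $z_2=(t,0,\dots,0)$, $z_3=(0,t,0,\dots,0)$ the $\Delta$-positive definiteness matrix has determinant $1-e^{-2at^2}-2e^{-at^2}+2e^{-2at^2}\cos\frac{t^2}{2}=\bigl(a^2-\tfrac14\bigr)t^4+O(t^6)$, which is negative for small $t$ when $a<\tfrac12$, giving the bound with no unbounded-operator issues. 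What the two approaches buy: the paper's is shorter because it imports the full classification of Gaussian covariance matrices; yours is more self-contained, constructive on the sufficiency side, and makes visible why $\tfrac12$ is exactly the uncertainty threshold.
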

\begin{proof}
Observe that $f$ is the characteristic function of a Gaussian
probability measure with the covariance matrix $Q=aI$. From
\cite[Chapter V, \$\$ 4, 5]{H} (see also \cite{U}) it follows that an
arbitrary positive-definite $2d\times2d$ matrix $Q$ is the covariance
matrix of a Gaussian probability operator if and only if the
following inequality holds
\begin{equation}\label{cm}
 \la Qz,z\rangle+\la Qz',z'\rangle\geq\Delta(z,z'),\qquad z,z'\in\mathbb{R}^{2d},
\end{equation}
which in our case amounts to saying that
\[
 a\big(\|z\|^2+\|z'\|^2\big)\geq\Delta(z,z'),\qquad z,z'\in\mathbb{R}^{2d}.
\]
The inequality above may be rewritten in the form
\[
 \sum_{k=1}^d (ax_k^2+ay_k^2+a{x'_k}^2+a{y'_k}^2-x_ky'_k+y_kx'_k)\geq 0,
 \quad x_k,y_k,x'_k,y'_k\in\mathbb{R}.
\]
It is easily seen that this inequality holds if and only if for each
\linebreak $k=1,\dots,d$  and arbitrary
$x_k,y_k,x'_k,y'_k\in\mathbb{R}$ we have
\[
 ax_k^2+ay_k^2+a{x'_k}^2+a{y'_k}^2-x_ky'_k+y_kx'_k\geq 0,
\]
which, in turn, is equivalent to the positive definiteness of the
matrix
\[
 \begin{bmatrix}
  a & 0 & 0 & -\frac{1}{2}\\
  0 & a & \frac{1}{2} & 0\\
  0 & \frac{1}{2} & a & 0\\
  -\frac{1}{2} & 0 & 0 & a
  \end{bmatrix}.
\]
Since the eigenvalues of this matrix are equal to $a\pm\frac{1}{2}$
the conclusion follows.
\end{proof}
We also have the following simple property of the covariance matrix
of a Gaussian probability operator
\begin{lemma}
Let $Q$ be the covariance matrix of a Gaussian probability operator.
Then $Q$ is non-singular
\end{lemma}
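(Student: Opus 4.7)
The plan is a short argument by contradiction using inequality~\eqref{cm} established in the previous lemma. Suppose $Q$ is singular; then there exists a non-zero vector $z_0\in\mathbb{R}^{2d}$ with $Qz_0=0$, so in particular $\la Qz_0,z_0\rangle=0$. I would exploit this by plugging $z=tz_0$ into~\eqref{cm}, with $t\in\mathbb{R}$ and $z'\in\mathbb{R}^{2d}$ both arbitrary, which reduces the inequality to
\[
 \la Qz',z'\rangle\geq t\,\Delta(z_0,z').
\]

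The left-hand side no longer depends on $t$, so the inequality can hold for all real $t$ only if $\Delta(z_0,z')=0$; otherwise, sending $t\to+\infty$ (or $-\infty$, depending on the sign of $\Delta(z_0,z')$) produces a contradiction. Thus $\Delta(z_0,z')=0$ for every $z'\in\mathbb{R}^{2d}$.

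The final step is to appeal to the non-degeneracy of the symplectic form $\Delta$ on $\mathbb{R}^{2d}$: writing $z_0=(x_1,y_1,\dots,x_d,y_d)$ and testing against the standard basis vectors of $\mathbb{R}^{2d}$ in the formula $\Delta(z,z')=\sum_{k=1}^d(x_ky'_k-y_kx'_k)$ immediately forces $x_k=y_k=0$ for $k=1,\dots,d$, i.e.\ $z_0=0$, contradicting our choice of $z_0$.

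There is no real obstacle here --- everything is a direct consequence of the covariance inequality~\eqref{cm} together with the non-degeneracy of the symplectic form $\Delta$. The only thing worth stating carefully in the written proof is why the one-parameter family of inequalities above is incompatible with a non-zero value of $\Delta(z_0,z')$, but this is just the elementary remark that an affine function of $t$ bounded above (or below) must be constant.
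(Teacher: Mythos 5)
Your proof is correct and is essentially the paper's own argument: both substitute a scalar multiple $t$ of the null vector of $Q$ into the covariance inequality \eqref{cm}, note that the left-hand side becomes independent of $t$ while the right-hand side is linear in $t$, and conclude via the non-degeneracy of $\Delta$. You merely spell out the final non-degeneracy step that the paper compresses into ``which is clearly impossible.''
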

\begin{proof}
Indeed, assume that $Qz'=0$ for some $0\neq z'\in\mathbb{R}^{2d}$.
Then for each fixed $z\in\mathbb{R}^{2d}$ and an arbitrary
$t\in\mathbb{R}$ we have on account of \eqref{cm}
\[
 \la Qz,z\rangle=\la Qz,z\rangle+\la Q(tz'),(tz')\rangle\geq
 \Delta(z,tz')=t\Delta(z,z'),
\]
which is clearly impossible.
\end{proof}
The following proposition provides estimation on the coefficients of
the norming matrices.
\begin{proposition}\label{P}
Let $\{A_n\}$ be an admissible sequence of matrices of the form
\eqref{nm}. Then
\[
 a_k^{(n)}\geq\frac{1}{\sqrt{n}}\qquad\text{for each}\quad k=1,\dots,d.
\]
\end{proposition}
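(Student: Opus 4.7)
The plan is to exploit admissibility by plugging in one carefully chosen test operator, namely the minimal Gaussian produced by Lemma~1, and then to read off the required bound from the covariance-matrix inequality \eqref{cm}.

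First I would specialise admissibility to the choice $T_1=\dots=T_n=T$, where $T$ is the Gaussian probability operator with characteristic function $\widehat{T}(z)=e^{-\|z\|^2/4}$; this operator exists by Lemma~1 applied with $a=1/2$. Admissibility of $A_n$ then guarantees that
\[
 z\mapsto\prod_{k=1}^n\widehat{T}(A_nz)=\exp\Big(-\frac{n}{4}\sum_{k=1}^d(a_k^{(n)})^2(x_k^2+y_k^2)\Big)
\]
is the characteristic function of some probability operator $S_n$. Since this function has the form $\exp(-\tfrac{1}{2}\langle Q_nz,z\rangle)$ with $Q_n$ the diagonal matrix whose two diagonal entries in each $k$-th block equal $\tfrac{n}{2}(a_k^{(n)})^2$, the operator $S_n$ is Gaussian with covariance matrix $Q_n$.

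Next I would apply the covariance-matrix criterion \eqref{cm} to $Q_n$: we must have $\langle Q_nz,z\rangle+\langle Q_nz',z'\rangle\geq\Delta(z,z')$ for all $z,z'\in\mathbb{R}^{2d}$. Because $Q_n$ is block-diagonal with identical scalar $2\times2$ blocks and $\Delta$ also decouples across the $d$ coordinate pairs, this inequality splits into $d$ independent inequalities. For fixed $k$, restricting $z,z'$ to be supported on the $k$-th pair of coordinates reproduces exactly the inequality analysed in Lemma~1 with $a$ replaced by $\tfrac{n}{2}(a_k^{(n)})^2$. The eigenvalue computation carried out in that lemma then forces $\tfrac{n}{2}(a_k^{(n)})^2\geq\tfrac{1}{2}$, which rearranges to $a_k^{(n)}\geq 1/\sqrt{n}$.

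No step should present a real obstacle; the only point worth stating cleanly is that once admissibility furnishes a probability operator whose characteristic function is the exponential of a negative-definite quadratic form, that operator is automatically Gaussian in the sense of the paper, since the quantum characteristic function uniquely determines the operator. The block-diagonal structure \eqref{nm} of $A_n$ is what allows the decoupling into $d$ separate single-mode problems, each of which is resolved verbatim by Lemma~1.
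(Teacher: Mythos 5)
Your proposal is correct and follows essentially the same route as the paper: test admissibility on $n$ copies of the Gaussian operator with characteristic function $e^{-\|z\|^2/4}$, identify the resulting product as a Gaussian probability operator with block-diagonal covariance $\tfrac{n}{2}(a_k^{(n)})^2$, and extract the bound from the criterion \eqref{cm}. The only cosmetic difference is that you obtain $\tfrac{n}{2}(a_k^{(n)})^2\geq\tfrac12$ by restricting \eqref{cm} to the $k$-th coordinate block and reusing the $4\times4$ eigenvalue computation of Lemma~1, whereas the paper substitutes $x_k'=-x_k$, $y_k'=y_k$ to reduce to a $2\times2$ positivity condition; both yield the same conclusion.
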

\begin{proof}
Let $T_1=\dots=T_n=T$ be Gaussian probability operators with the
characteristic function
\[
 \widehat{T}(z)=e^{-\frac{1}{4}\|z\|^2}.
\]
Then
\begin{align*}
 \prod_{k=1}^n\widehat{T}_k(A_nz)&=
 \big[\widehat{T}(a_1^{(n)}x_1,a_1^{(n)}y_1,\dots,a_d^{(n)}x_d
 a_d^{(n)}y_d)\big]^n\\&=\exp\Big[-\frac{n}{4}\sum_{k=1}^d
 a_k^{(n)2}(x_k^2+y_k^2)\Big],
\end{align*}
which is a Gaussian probability operator with covariance matrix
\[
 Q=\frac{n}{2}
  \begin{bmatrix}
   a_1^{(n)2} & 0 & \dots & 0 & 0\\
   0 & a_1^{(n)2} & \dots & 0 & 0\\
   \vdots & \vdots & \ddots & \vdots\\
   0 & 0 & \dots & a_d^{(n)2} & 0\\
   0 & 0 & \dots & 0 & a_d^{(n)2}
  \end{bmatrix}
\]
Now the inequality \eqref{cm} takes the form
\[
 \frac{n}{2}\sum_{k=1}^d a_k^{(n)2}\big(x_k^2+y_k^2+{x'_k}^2
 +{y'_k}^2\big)\geq\sum_{k=1}^d(x_ky'_k-y_kx'_k).
\]
Putting $x'_1=-x_1,\,y'_1=y_1,\,x_k=y_k=x'_k=y'_k=0$ for
$k=2,\dots,d$ we obtain
\[
 na_1^{(n)2}(x_1^2+y_1^2)\geq 2x_1y_1
\]
which means that the matrix
\[
 \begin{bmatrix}
  na_1^{(n)2} & -1\\
  -1 & na_1^{(n)2}
 \end{bmatrix}
\]
is positive definite. Consequently,
\[
 n^2a_1^{(n)4}\geq 1,
\]
i.e.
\[
 a_1^{(n)}\geq\frac{1}{\sqrt{n}}.
\]
By the same token we obtain the required inequalities for
$k=2,\dots,d$.
\end{proof}
The next lemma is a known classical result from the theory of domains
of attraction (cf. \cite[Chapter IX, \$ 8]{F}).
\begin{lemma}\label{L}
Let $\nu$ be a probability measure belonging to the domain of
attraction of a Gaussian law, i.e. there are constants
$b_n>0,\,c_n\in\mathbb{R}$ such that
\[
 \lim_{n\to\infty}e^{itc_n}\big[\hat{\nu}(b_nt)\big]^n=
 e^{itm-\frac{1}{2}\sigma^2t^2},\qquad t\in\mathbb{R},
\]
for some $m\in\mathbb{R},\,\sigma>0$. If $b_n\geq\frac{1}{\sqrt{n}}$,
then $\nu$ has finite variance, i.e.
\[
 \int_{-\infty}^{\infty}\lambda^2\,\nu(d\lambda)<\infty.
\]
\end{lemma}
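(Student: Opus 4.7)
The plan is to work with the modulus $|\hat\nu|$, which eliminates the centring sequence $c_n$ and reduces the problem to a second-moment estimate for the symmetrised measure. Taking absolute values in the hypothesis gives $|\hat\nu(b_n t)|^{2n}\to e^{-\sigma^2 t^2}$ for each $t\in\mathbb{R}$. Since $\sigma>0$ forces $\nu$ to be non-degenerate, a short argument shows $b_n\to 0$: any cluster point $b>0$ would, by continuity of $\hat\nu$, give $|\hat\nu(bt)|=1$ for every $t$, hence $\nu=\delta_a$, which is incompatible with a non-trivial Gaussian limit. Because $|\hat\nu(b_n t)|\to 1$ (otherwise the $2n$-th power could not tend to a positive limit), expanding $\log(1-a_n)=-a_n+o(a_n)$ upgrades the convergence to
\[
 n\bigl(1-|\hat\nu(b_n t)|^{2}\bigr)\longrightarrow\sigma^{2}t^{2},\qquad t\in\mathbb{R},
\]
so in particular $n\bigl(1-|\hat\nu(b_n)|^{2}\bigr)\leq C$ for some constant $C$ and all sufficiently large $n$.

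Next I would use the identity
\[
 1-|\hat\nu(s)|^{2}=\iint\bigl(1-\cos s(y-y')\bigr)\,\nu(dy)\,\nu(dy'),
\]
restrict the integration to $\{|y-y'|\leq 1/b_n\}$, and apply the elementary inequality $1-\cos u\geq u^{2}/4$, valid for $|u|\leq 1$, to obtain
\[
 C\geq n\bigl(1-|\hat\nu(b_n)|^{2}\bigr)\geq\frac{n b_n^{2}}{4}\iint_{|y-y'|\leq 1/b_n}(y-y')^{2}\,\nu(dy)\,\nu(dy').
\]
The hypothesis $b_n\geq 1/\sqrt n$ now enters in the convenient form $nb_n^{2}\geq 1$, leaving
\[
 \iint_{|y-y'|\leq 1/b_n}(y-y')^{2}\,\nu(dy)\,\nu(dy')\leq 4C.
\]
Since $1/b_n\to\infty$, monotone convergence yields $\iint(y-y')^{2}\,\nu(dy)\,\nu(dy')<\infty$, and expanding the square together with Fubini forces $\int\lambda^{2}\,\nu(d\lambda)<\infty$, as required.

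The main difficulty is not any single step, each being elementary, but choosing a reformulation in which the constraint $b_n\sqrt n\geq 1$ is used quantitatively: passage to $|\hat\nu|^{2}$ kills the centrings $c_n$, and the truncation at $1/b_n$ combined with the Taylor bound on $1-\cos$ is arranged precisely so that the factor $nb_n^{2}$ produced by the estimate matches the factor controlled by the hypothesis, leaving a uniform bound on the truncated second moment of $\nu\otimes\nu$.
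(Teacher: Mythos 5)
Your proof is correct, but it takes a genuinely different route from the paper. The paper leans on the classical theory of domains of attraction: it invokes Feller's Chapter~IX, \S~8, Theorem~1a (formula (8.12)), which gives $n b_n^2\,U(x/b_n)\to c$ for the truncated second moment $U(x)=\int_{-x}^{x}\lambda^2\,\nu(d\lambda)$, and then the hypothesis $nb_n^2\geq 1$ immediately keeps $U(x/b_n)$ bounded while $x/b_n\to\infty$. You instead symmetrise: passing to $|\hat\nu|^2$ kills the centrings $c_n$, the logarithmic expansion upgrades the pointwise limit to $n\bigl(1-|\hat\nu(b_nt)|^2\bigr)\to\sigma^2t^2$, and the identity $1-|\hat\nu(s)|^2=\iint\bigl(1-\cos s(y-y')\bigr)\,\nu(dy)\,\nu(dy')$ combined with $1-\cos u\geq u^2/4$ for $|u|\leq 1$ converts $nb_n^2\geq 1$ into a uniform bound on the truncated second moment of the symmetrised measure. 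What your route buys is self-containedness: it needs nothing from the regular-variation machinery behind Feller's theorem, only elementary estimates on characteristic functions; what the paper's route buys is brevity, since the hard analysis is delegated to the cited result.

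Two small points to tidy up. First, your argument that $b_n\to 0$ handles only finite cluster points $b>0$; you must also rule out $b_{n_k}\to\infty$. This is easy but needs a separate line: since $|\hat\nu(b_nt)|^{2n}$ are characteristic functions converging pointwise to the continuous limit $e^{-\sigma^2t^2}$, the convergence is uniform on $|t|\leq 1$, so substituting $t=s/b_{n_k}$ gives $|\hat\nu(s)|^{2n_k}\to 1$ for every $s$, forcing $|\hat\nu|\equiv 1$ and $\nu$ degenerate, a contradiction (the paper sidesteps the whole issue by citing the theory of limit laws for $b_n\to 0$). In fact you only need $\liminf_n b_n=0$, since a subsequence suffices for the monotone convergence step. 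Second, the last step is cleaner via Tonelli than by expanding the square, which presupposes integrability you have not yet established: from $\iint(y-y')^2\,\nu(dy)\,\nu(dy')<\infty$ there is some $y_0'$ with $\int(y-y_0')^2\,\nu(dy)<\infty$, and then $\lambda^2\leq 2(\lambda-y_0')^2+2{y_0'}^2$ gives $\int\lambda^2\,\nu(d\lambda)<\infty$.
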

\begin{proof}
We shall follow \cite{F}. First, note that the theory of limit laws
yields $b_n\to 0$. Fix an arbitrary $x>0$ and denote
\[
 U(x)=\int_{-x}^x\lambda^2\,\nu(d\lambda).
\]
According to  formula (8.12) in \cite[Chapter IX, \$ 8, Theorem
1a]{F} we have
\[
 nb_n^2U\Big(\frac{x}{b_n}\Big)\to c,
\]
for some constant $c$. (We warn the reader that there is a difference
in the notation employed in \cite{F} and here, namely, we use $b_n$
for what in \cite{F} is denoted by $\frac{1}{a_n}$ and $c_n$ for what
in \cite{F} is denoted by $b_n$.) Since
\[
 nb_n^2\geq 1,
\]
we get
\[
 \int_{-\infty}^{\infty}\lambda^2\,\nu(d\lambda)
 =\lim_{n\to\infty}\int_{-\frac{x}{b_n}}^
 {\frac{x}{b_n}}\lambda^2\,\nu(d\lambda)
 =\lim_{n\to\infty}U\Big(\frac{x}{b_n}\Big)<\infty.
\]
\end{proof}
Now we are in a position to prove our theorem.
\begin{proof}[Proof of the Theorem. Necessity.]
Assume that for some probability operator $T$, a sequence $\{z_n\}$
of vectors from $\mathbb{R}^{2d}$ and a sequence $\{A_n\}$ of
admissible matrices of form \eqref{nm} we have
\begin{equation}\label{gr}
 \begin{aligned}
  \lim_{n\to\infty}&e^{i\la z_n,z\rangle}\big[\widehat{T}
  (a_1^{(n)}x_1,a_1^{(n)}y_1,\dots,
  a_d^{(n)}x_d,a_d^{(n)}y_d)\big]^n\\&=\widehat{S}(\punkt),
 \end{aligned}
\end{equation}
for each $z=(\punkt)\in\mathbb{R}^{2d}$. Since $S$ is Gaussian
\begin{equation}\label{S}
 \widehat{S}(z)=e^{i\la z_0,z\rangle-\frac{1}{2}\la Qz,z\rangle}
\end{equation}
for some $z_0\in\mathbb{R}^{2d}$ and covariance matrix $Q$. Let
$\mu_z$ be the probability measure defined by the formula
\eqref{miara}. Our aim consists in showing that $\mu_z$ has finite
second moment. We have
\begin{equation}\label{mi}
 \widehat{T}(tz)=\tr TV(tz)=\int_{-\infty}^{\infty}e^{it\lambda}\,
 \tr TE_z(d\lambda)=\widehat{\mu}_z(t),\qquad t\in\mathbb{R}.
\end{equation}
Fix $z=(\punkt)\in\mathbb{R}^{2d}$, and put
\begin{align*}
 &\bar{z}_1=(x_1,y_1,0,\dots,0),\, \bar{z}_2=(0,0,x_2,y_2,0,\dots,0),
 \dots,\\&\bar{z}_d=(0,\dots,0,x_d,y_d).
\end{align*}
Assume for a while that for each $k=1,\dots,d,\, \bar{z}_k\neq 0$. We
have on account of \eqref{gr}, \eqref{S} and \eqref{mi}
\begin{align*}
 \lim_{n\to\infty}&\big[\widehat{\mu}_{\bar{z}_k}(a_k^{(n)}t)\big]^n
 e^{it\la z_n,\bar{z}_k\rangle}=\lim_{n\to\infty}\Big[\widehat{T}
 (a_k^{(n)}t\bar{z}_k)\Big]^n
 e^{i\la z_n,t\bar{z}_k\rangle}\\= &e^{it\la z_0,\bar{z}_k\rangle-
 \frac{1}{2}t^2\la Q\bar{z}_k,\bar{z}_k\rangle}.
\end{align*}
From Proposition \ref{P} and Lemma \ref {L} we obtain that all the
measures $\mu_{\bar{z}_k},\,k=1,\dots,d$ have finite second moments,
\[
 m_2(\mu_{\bar{z}_k})<\infty.
\]
Of course, the same is true if $\bar{z}_k=0$, because then
$\mu_{\bar{z}_k}$ is the Dirac measure concentrated at zero.

From the commutation relations \eqref{W-S} it follows that the
unitary groups
$\{V(tz):t\in\mathbb{R}\},\,\{V(t_1\bar{z}_1):t_1\in\mathbb{R}\},\dots,
\{V(t_d\bar{z}_d):t_d\in\mathbb{R}\}$ form a commuting system of
operators; moreover,
\begin{equation}\label{R}
 e^{itR(z)}=V(tz)=V(t\bar{z}_1)\cdot\ldots\cdot V(t\bar{z}_d)
 =e^{itR(\bar{z}_1)}\cdot
 \ldots\cdot e^{itR(\bar{z}_d)}
\end{equation}
for each $t\in\mathbb{R}$. It follows that there is a spectral
measure $F$ and Borel functions $f,\,f_k,\,k=1,\dots,d$ such that
\[
 R(z)=\int_{-\infty}^{\infty}f(\lambda)\,F(d\lambda),\quad
 R(\bar{z}_k)=\int_{-\infty}^{\infty}f_k(\lambda)\,F(d\lambda),
\]
and the equality \eqref{R} yields
\[
 f(\lambda)=f_1(\lambda)+\dots+f_d(\lambda).
\]
Furthermore, substituting $t=f(\lambda)$ we obtain
\[
 R(z)=\int_{-\infty}^{\infty}f(\lambda)\,F(d\lambda)
 =\int_{-\infty}^{\infty}t\,(f\circ F)(dt),
\]
where
\[
 (f\circ F)(\Lambda)=F(f^{-1}(\Lambda)),\qquad \Lambda\in\mathcal{B}
 (\mathbb{R}).
\]
On the other hand we have
\[
 R(z)=\int_{-\infty}^{\infty}\lambda\,E_z(d\lambda)
\]
and the uniqueness of the spectral decomposition yields the equality
\[
 E_z=f\circ F.
\]
By the same token we obtain the equalities
\[
 E_{\bar{z}_k}=f_k\circ F,\qquad k=1,\dots,d.
\]
Consequently, we get
\begin{align*}
 m_2(\mu_z)&=\int_{-\infty}^{\infty}t^2\,\tr TE_z(dt)
 =\int_{-\infty}^{\infty}t^2\,\tr T(f\circ F)(dt)\\
 &=\int_{-\infty}^{\infty}f^2(\lambda)\,\tr TF(d\lambda),
\end{align*}
and analogously
\[
 m_2(\mu_{\bar{z}_k})=\int_{-\infty}^{\infty}f_k^2(\lambda)\,
 \tr TF(d\lambda),\qquad k=1,\dots,d.
\]
Finally, we have
\[
 f^2(\lambda)=\big[f_1(\lambda)+\dots+f_d(\lambda)\big]^2
 \leq d\big[f_1^2(\lambda)+\dots+f_d^2(\lambda)\big],
\]
yielding
\begin{align*}
 m_2(\mu_z)&=\int_{-\infty}^{\infty}f^2(\lambda)\,\tr TF(d\lambda)
 \leq \int_{-\infty}^{\infty}d\sum_{k=1}^d f_k^2(\lambda)\,
 \tr TF(d\lambda)\\&=d\sum_{k=1}^d\int_{-\infty}^{\infty}f_k^2(\lambda)\,
 \tr TF(d\lambda)=d\sum_{k=1}^d m_2(\mu_{\bar{z}_k})<\infty,
\end{align*}
which ends the proof of necessity.

\emph{Sufficiency.} A proof of sufficiency is essentially contained
in \cite{CH}, however, since the setup of \cite{CH} is different from
the one adopted in our work and since some considerations about
centring should be taken into account we present a short proof. Let
$T$ be a probability operator having finite variance. Take
\[
 a_1^{(n)}=\dots=a_d^{(n)}=\frac{1}{\sqrt{n}}.
\]
Then the sequence of norming matrices $\{A_n\}$ reduce to the
sequence of numbers $\big\{\frac{1}{\sqrt{n}}\big\}$, and from
\cite[Proposition 2.5]{U} it follows that this sequence is admissible
(this can also be checked straightforwardly, namely, it is to be
verified that the function
\[
 \mathbb{R}^{2d}\ni z\mapsto\prod_{k=1}^n \widehat{T}_k\Big(\frac{z}
 {\sqrt{n}}\Big)
\]
is $\Delta$-positive definite for arbitrary probability operators
$T_1,\dots,T_n$). For an arbitrary $z\in\mathbb{R}^{2d}$, let as
before $\mu_z$ be the probability measure defined by the formula
\eqref{miara}. The mean value of $\mu_z$ equals to $m_1^T(z)$;
moreover, it is pointed out in \cite[Chapter V, \$ 4]{H} that $m_1^T$
is a linear function of $z$, which can be checked using the known
formula for moments of a probability measure:
\[
 m_1^T(z)=-i\frac{d}{dt}\,\widehat{\mu}_z(t)\Big|_{t=0}=
 -i\frac{d}{dt}\,\widehat{T}(tz)\Big|_{t=0}=-i\frac{d}{dt}\,
 \tr TV(tz)\Big|_{t=0}.
\]
Consequently, there are vectors $z_n\in\mathbb{R}^{2d}$ such that
\[
 \la z_n,z\rangle=-m_1^T(z)\sqrt{n}\qquad\text{for each}\quad
 z\in\mathbb{R}^{2d}.
\]
We have
\[
 e^{it\la z_n,z\rangle}\widehat{T}\Big(\frac{t}{\sqrt{n}}z\Big)=
 e^{-itm_1^T(z)\sqrt{n}}\widehat{\mu}_z\Big(\frac{t}{\sqrt{n}}\Big).
\]
From the classical Lindeberg--L\'{e}vy central limit theorem it
follows that
\[
 \lim_{n\to\infty}e^{-itm_1^T(z)\sqrt{n}}\bigg[\widehat{\mu}_z
 \Big(\frac{t}{\sqrt{n}}\Big)\bigg]^n=e^{-\frac{1}{2}\sigma_z^2t^2}
\]
for some $\sigma_z^2>0$, which means that
\[
 \lim_{n\to\infty}e^{it\la z_n,z\rangle}\bigg[\widehat{T}
 \Big(\frac{t}{\sqrt{n}}z\Big)\bigg]^n=
 e^{-\frac{1}{2}\sigma_z^2t^2}.
\]
Putting $t=1$ we get
\[
 \lim_{n\to\infty}e^{i\la z_n,z\rangle}\bigg[\widehat{T}\Big(\frac{z}
 {\sqrt{n}}\Big)\bigg]^n=e^{-\frac{1}{2}\sigma_z^2},
\]
and the existence of the limit on the left hand side means that on
the right hand side we have the characteristic function of a Gaussian
probability operator which finishes the proof.
\end{proof}

\end{document}